\newtheorem{claim}{Claim}
\newtheorem{theorem}{Theorem}[section]
\newtheorem{lemma}[theorem]{Lemma}
\newtheorem{corollary}[theorem]{Corollary}
\newtheorem{proposition}[theorem]{Proposition}
\newtheorem{problem}[theorem]{Problem}
\newtheorem{question}[theorem]{Question}
\newcommand{\set}[1]{\ensuremath{\left\{#1 \right\}}}
\noindent \emph{Proof.} {}{#1}{}}{$~$\hfill $~\blacklozenge$ \vspace{0.2cm}}
\newcommand{\qedclaim}{$~$\hfill $~\blacklozenge$ \vspace{0.2cm}}
\definecolor{defblue}{rgb}{0.4,0,0.84}
\definecolor{greyblue}{rgb}{0.23,0.4,0.70}
\definecolor{orange}{rgb}{1.0,0.5,0.2}
\definecolor{violet}{rgb}{0.55,0,0.55}
\g@addto@macro{\UrlBreaks}{\UrlOrds}
\newcolumntype{Y}{>{\centering\arraybackslash}X}
\begin{document}

\title{{\bf Note on robust coloring of planar graphs}}

\author
{
	Franti\v{s}ek Kardo\v{s}\thanks{CNRS, LaBRI, University of Bordeaux, Talence, France. E-mail: \!\texttt{frantisek.kardos@u-bordeaux.fr}} \thanks{Comenius University, Faculty of mathematics, physics and informatics, Bratislava, Slovakia.}\quad
	Borut Lu\v{z}ar\thanks{Faculty of Information Studies in Novo mesto, Slovenia. E-mail: \texttt{borut.luzar@gmail.com}} \thanks{Rudolfovo Institute, Novo mesto, Slovenia.}\quad
	Roman Sot\'{a}k\thanks{Pavol Jozef \v Saf\'{a}rik University, Faculty of Science, Ko\v{s}ice, Slovakia. E-mail: \texttt{roman.sotak@upjs.sk}}
}

\maketitle

{
\begin{abstract}
	We consider the robust chromatic number $\chi_1(G)$ of planar graphs $G$ 
	and show that there exists an infinite family of planar graphs $G$
	with $\chi_1(G) = 3$, thus solving a recent problem of Bacs\'{o}~et~al. 
	(The robust chromatic number of graphs, Graphs Combin. 40 (2024), \#89).
\end{abstract}
}

\medskip
{\noindent\small \textbf{Keywords:} robust coloring, robust chromatic number, Tutte graph, planar graph.}

\section{Introduction}

In this paper, we consider simple graphs; i.e., graphs without loops and multiedges.
A mapping $f \, : \, V(G) \rightarrow E(G) \cup \set{\emptyset}$ is a {\em $1$-selection} of a graph $G$
if for every $v \in V(G)$ either $f(v)$ is incident with $v$ or $f(v) = \emptyset$.
In other words, $f$ is an assigment of at most one incident edge for every vertex of a graph.
We call the resulting set of edges $f(V(G))$ a {\em $1$-selection set}.
Clearly, each component of the graph induced by any $1$-selection set is a graph with at most one cycle.

Given a $1$-selection $f$, the graph $G_f$ obtained from $G$ by deleting the edges from $f(V(G))$ 
is called the {\em $1$-removed subgraph} of $G$ regarding $f$.
Using the set of all $1$-removed subgraphs of $G$, 
we define the {\em robust chromatic number} of a graph $G$ as
$$
	\chi_1(G) = \min_f \chi(G_f)\,,
$$
where $\chi(G_f)$ is the chromatic number of $G_f$.

The notion of robust coloring was recently introduced by Patk\'{o}s, Tuza, and Vizer~\cite{PatTuzViz24},
who used it as a tool for deriving estimates on a Turan-type extremal problem.
Afterwards, a systematic investigation of the robust chromatic number and other robust invariants 
was done by Bacs\'{o} et al. in~\cite{BacPatTuzViz23},
and in~\cite{BacBujPatTuzViz23}, results for some specific graph classes were presented.

In particular, for the robust chromatic number, the following two general results were derived.
\begin{theorem}[Bacs\'{o} et al.~\cite{BacPatTuzViz23}]
	For any graph $G$, it holds that
	$$
		\bigg\lceil \frac{\chi(G)}{3} \bigg\rceil \le \chi_1(G) \le \chi(G)\,.
	$$
	Moreover, both bounds are tight.
\end{theorem}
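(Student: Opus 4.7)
My plan is to prove the two inequalities separately and then exhibit extremal graphs. The upper bound $\chi_1(G) \le \chi(G)$ is immediate from the definition: taking the identically empty $1$-selection $f(v) = \emptyset$ for every $v$ gives $G_f = G$, and therefore $\chi_1(G) \le \chi(G_f) = \chi(G)$.

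For the lower bound the key observation, already recorded in the excerpt, is that the edges of any $1$-selection set induce a pseudo-forest $H$ in $G$ (every component has at most one cycle); consequently $\chi(H) \le 3$, because each component is either a tree ($2$-chromatic) or a tree plus a single extra edge whose cycle forces at most one additional color. Given an optimal $f$, I would take a proper $\chi(G_f)$-coloring of $G_f$ and a proper $3$-coloring of $H$ and combine them coordinate-wise into a map $v \mapsto (c_f(v), c_H(v))$. Since every edge of $G$ lies in $G_f$ or in $H$, at least one of the two coordinates separates its endpoints, so the product is a proper coloring of $G$ using at most $3\chi(G_f)$ colors. Therefore $\chi(G) \le 3\chi_1(G)$, and dividing and rounding up yields $\lceil \chi(G)/3 \rceil \le \chi_1(G)$.

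Finally, for tightness I would point to $K_{3,3}$ (or any bipartite graph with more edges than vertices) for the upper bound, since a $1$-selection set has size at most $|V(G)|$ and hence cannot kill every edge, forcing $\chi(G_f) \ge 2 = \chi(G)$; and to the complete graphs $K_n$ for the lower bound, where partitioning $V(K_n)$ into triples and removing each triangle (plus the leftover edge if $n \equiv 2 \pmod 3$) via a cyclic $1$-selection produces a complete multipartite graph with parts of size at most three, whose chromatic number is $\lceil n/3 \rceil = \lceil \chi(K_n)/3 \rceil$.

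The only mildly delicate step in the whole plan is verifying the $3$-colorability of $H$, but this is essentially free once the pseudo-forest observation from the excerpt is invoked; no real obstacle is expected.
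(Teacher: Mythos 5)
Your proof is correct: the empty selection gives the upper bound, the product of a proper coloring of $G_f$ with a $3$-coloring of the pseudoforest induced by the $1$-selection set gives $\chi(G)\le 3\chi_1(G)$ and hence the lower bound, and $K_{3,3}$ and $K_n$ (with the triangle-removing cyclic selection) witness tightness. Note that the paper only quotes this theorem from Bacs\'o et al.\ without reproducing a proof, so there is no argument in the text to compare against; your write-up is a valid self-contained proof, with the only point worth making explicit being that the $3$-coloring of the pseudoforest must be extended (arbitrarily) to the vertices not covered by the selection set before forming the product coloring.
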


Recall that a graph is {\em $d$-degenerate} if its every subgraph contains a vertex of degree at most $d$.
\begin{theorem}[Bacs\'{o} et al.~\cite{BacPatTuzViz23}]
	\label{thm:deg}
	For any $d$-degenerate graph $G$, it holds that
	$$
		\chi_1(G) \le \frac{d}{2} + 1\,.		
	$$
	Moreover, this upper bound is tight as for every integer $k \ge 1$ 
	there exists a $2k$-degenerate graph $H_k$ with $\chi_1(H_k) = k+1$.
\end{theorem}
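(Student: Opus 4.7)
The plan is to prove the two parts of the theorem separately: the upper bound via a greedy coloring argument, and the tightness by exhibiting explicit extremal graphs.

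For the upper bound, I would order the vertices $v_1, \dots, v_n$ along a degeneracy order, so that each $v_i$ has at most $d$ earlier neighbors. Setting $m = \lfloor d/2 \rfloor + 1$ (so that $m \le d/2 + 1$), I process the vertices in order and, at step $i$, examine the colors used by the set $N^-(v_i)$ of earlier neighbors of $v_i$. If some color in $\{1, \dots, m\}$ is missing from $N^-(v_i)$, I assign that color to $v_i$ and set $f(v_i) = \emptyset$. Otherwise all $m$ colors appear in $N^-(v_i)$; since $|N^-(v_i)| \le d < 2m$, a pigeonhole argument forces some color $c^*$ to appear on exactly one earlier neighbor $u$, and I then color $v_i$ with $c^*$ and set $f(v_i) = v_iu$. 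The resulting $f$ is a valid $1$-selection and the coloring is proper on $G_f$ because each $v_i$'s only potential conflict is deleted by $f(v_i)$, which gives $\chi_1(G) \le m \le d/2 + 1$.

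For the tightness, I would construct, for each $k \ge 1$, an explicit $2k$-degenerate graph $H_k$ satisfying $\chi_1(H_k) \ge k+1$; the matching upper bound then follows from the first part. Recall that a $1$-selection set, being a graph with at most one cycle per component, is a pseudoforest, and in particular cannot contain any subgraph $H'$ with $|E(H')| > |V(H')|$. The goal is therefore to engineer $H_k$ so that, for every partition of $V(H_k)$ into $k$ color classes, the resulting set of monochromatic edges contains some induced subgraph with more edges than vertices, and hence cannot be the removed edge set of any $1$-selection. A plausible construction combines $k+1$ intertwined copies of a small dense odd-chromatic building block together with peripheral vertices added to guarantee $2k$-degeneracy.

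The hard part is the tightness: one must simultaneously keep $H_k$ sparse enough so that every subgraph has a vertex of degree at most $2k$, yet dense enough that no $k$-coloring of $V(H_k)$ can hide all monochromatic ``excess.'' Verifying the lower bound on $\chi_1(H_k)$ thus reduces to a case analysis or global counting argument over all $k$-partitions of $V(H_k)$, typically exploiting symmetries of the construction to cut down the number of cases that must be handled.
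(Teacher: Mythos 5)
First, note that the paper you are writing against does not actually prove this statement: it is quoted from Bacs\'{o} et al.\ \cite{BacPatTuzViz23} as a known result, so there is no in-paper proof to compare with. Judged on its own, your argument for the upper bound is correct and complete: the degeneracy order exists, $2m = 2\lfloor d/2\rfloor + 2 > d$ so the pigeonhole step is valid, and the key observation that every potential monochromatic edge is handled by its \emph{later} endpoint (which either avoids the colors of all earlier neighbors or deletes the unique back-edge to the one earlier neighbor sharing its color) makes $G_f$ properly colored with $m \le d/2+1$ colors. This is the standard greedy argument and almost certainly the one used in the cited source.

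The tightness half, however, is a genuine gap. You never define $H_k$: phrases such as ``a plausible construction combines $k+1$ intertwined copies of a small dense odd-chromatic building block together with peripheral vertices'' and ``reduces to a case analysis or global counting argument'' describe what a proof would have to accomplish rather than supplying one. The statement requires, for each $k\ge 1$, an explicit graph $H_k$ together with two verifications: (a) every subgraph of $H_k$ has a vertex of degree at most $2k$, and (b) for every partition of $V(H_k)$ into $k$ classes, the set $M$ of monochromatic edges fails to be a $1$-selection set, i.e.\ some component of $H_k[M]$ has more edges than vertices. Your reformulation of (b) via pseudoforests is correct, and note that the tension you point out is real and quantitative: a $2k$-degenerate graph on $n$ vertices has at most $2kn$ edges, while a balanced $k$-partition only forces about $n$ monochromatic edges on average in a graph with roughly $kn$ edges, so the construction must be arranged so that these forced edges concentrate into a component with a surplus \emph{for every} partition, not just balanced ones. (For $k=1$ an odd cycle already works, since it is $2$-degenerate and not bipartite yet any single edge is a $1$-selection set; but the general case is exactly the content you have deferred.) As it stands, only the inequality $\chi_1(G) \le d/2+1$ has been proved; the claimed tightness remains unestablished.
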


As a direct corollary of Theorem~\ref{thm:deg}, one obtains upper bounds 
on outerplanar, triangle-free planar, and planar graphs,
since, by Euler's formula, they are $2$-degenerate, $3$-degenerate, and $5$-degenerate, respectively.
\begin{corollary}~
	\label{cor:plan}
	\begin{itemize}
		\item[$(i)$] For any outerplanar graph $G$, it holds that $\chi_1(G) \le 2$.~\cite{BacPatTuzViz23}
		\item[$(ii)$] For any triangle-free planar graph $G$, it holds that $\chi_1(G) \le 2$.
		\item[$(iii)$] For any planar graph $G$, it holds that $\chi_1(G) \le 3$.~\cite{BacPatTuzViz23}
	\end{itemize}
\end{corollary}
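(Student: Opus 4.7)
The plan is to invoke Theorem~\ref{thm:deg} after recalling that each of the three classes has a standard degeneracy bound coming from Euler's formula; no new combinatorial work is needed, as the corollary is essentially a bookkeeping step.

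For (i), every outerplanar graph $H$ with $n \ge 2$ vertices satisfies $|E(H)| \le 2n-3$, so a short averaging argument applied to every subgraph shows that outerplanar graphs are $2$-degenerate. Plugging $d = 2$ into Theorem~\ref{thm:deg} gives $\chi_1(G) \le 2/2 + 1 = 2$. For (ii), Euler's formula together with the absence of triangles yields $|E(H)| \le 2n-4$ for every triangle-free planar graph $H$ with $n \ge 3$, so every subgraph contains a vertex of degree at most $3$, making such graphs $3$-degenerate. Theorem~\ref{thm:deg} with $d = 3$ then gives $\chi_1(G) \le 3/2 + 1 = 5/2$, which rounds down to $2$ since $\chi_1$ is integer-valued. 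For (iii), the classical inequality $|E(H)| \le 3n-6$ implies that every planar graph is $5$-degenerate, and Theorem~\ref{thm:deg} with $d = 5$ yields $\chi_1(G) \le 5/2 + 1 = 7/2$, i.e., $\chi_1(G) \le 3$.

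I do not expect any genuine obstacle, since all three parts are immediate once the degeneracy bounds are in hand. The only small subtlety worth mentioning is the integrality rounding in (ii) and (iii): Theorem~\ref{thm:deg} outputs a half-integer when $d$ is odd, but because $\chi_1$ is defined as a chromatic number it takes integer values, so the bound automatically tightens to $\lfloor d/2 + 1 \rfloor$.
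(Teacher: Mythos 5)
Your proposal is correct and follows exactly the route the paper intends: the paper derives the corollary directly from Theorem~\ref{thm:deg} by noting that outerplanar, triangle-free planar, and planar graphs are $2$-, $3$-, and $5$-degenerate by Euler's formula. Your explicit edge-count justifications of the degeneracy bounds and the integrality rounding for odd $d$ are just the details the paper leaves implicit.
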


The first two cases of Corollary~\ref{cor:plan} are both tight,
whereas for the case $(iii)$,
it was not clear whether the upper bound can be achieved by some planar graph;
Bacs\'{o}~et~al.~\cite{BacPatTuzViz23} proposed this question as a problem.
\begin{problem}[Bacs\'{o} et al.~\cite{BacPatTuzViz23}]
	Do there exist planar graphs with $\chi_1(G) = 3$, or is $2$ a universal upper bound?
\end{problem}

The truth of the latter part would provide an interesting feature that any planar graph
admits a $1$-selection set whose removal results in a bipartite planar graph.
This however is not the case as we show in this note.
\begin{theorem}
	\label{thm:main}
	There is an infinite family of planar graphs $G$ with $\chi_1(G) = 3$.
\end{theorem}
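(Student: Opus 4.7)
Since Corollary~\ref{cor:plan}$(iii)$ already gives $\chi_1(G)\le 3$ for every planar $G$, the task reduces to exhibiting a single planar $G_0$ with $\chi_1(G_0)\ge 3$ and then amplifying. The amplification is essentially free: for the disjoint union $G^{(k)}$ of $k$ copies of $G_0$, any $1$-selection of $G^{(k)}$ restricts to a $1$-selection on each copy, and the chromatic number of a disjoint union equals the maximum over its components, so $\chi_1(G^{(k)})=\chi_1(G_0)=3$ for every $k\ge 1$, yielding the desired infinite family.

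The lower bound rests on a standard reformulation in terms of pseudoforests: a set $F\subseteq E(G)$ is a $1$-selection set if and only if $F$ is a \emph{pseudoforest}, i.e.\ each connected component of $F$ contains at most one cycle. One direction is recorded in the excerpt; for the converse, any pseudoforest can be oriented with out-degree at most one at every vertex (orient each tree toward a chosen root, and each unicyclic component around its unique cycle with the attached tree arms pointing toward the cycle), and this orientation yields a $1$-selection $f$ with $f(V)=F$. Hence $\chi_1(G_0)\le 2$ is equivalent to the existence of a bipartition $V(G_0)=A\cup B$ for which the monochromatic subgraph $G_0[A]\cup G_0[B]$ is a pseudoforest, and the statement to be proved becomes: \emph{for every bipartition $(A,B)$ of $V(G_0)$, some connected component of $G_0[A]\cup G_0[B]$ contains two distinct cycles.}

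For $G_0$ I would build a planar graph by gluing together several copies of a small non-bipartite planar gadget of Tutte-fragment flavour (as hinted at by the keyword in the abstract) along shared vertices or triangles so as to preserve planarity. Each gadget should be designed so that in any $2$-colouring of its vertices, either two triangles sharing a common vertex end up in the same colour class (already producing a bicyclic monochromatic component), or the colouring restricts to one of a small list of ``safe'' patterns; the gluing then has to be arranged so that no global bipartition of $V(G_0)$ can be simultaneously safe on every gadget. The obvious single-piece candidates---odd wheels, prisms, the octahedron, the icosahedron---all fail the target property: each of them admits a balanced ``equatorial'' bipartition whose monochromatic subgraph is just a disjoint union of cycles or a single short even cycle, hence a pseudoforest. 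So the gadgets must be combined in a way that rules out such balanced bipartitions.

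The main obstacle is then verifying that \emph{every} bipartition of $V(G_0)$ leaves some bicyclic monochromatic component. I would tackle this by a case analysis, reduced by the automorphisms of $G_0$ to a short list of representative $2$-colourings (up to colour swap and gadget permutation), and then for each representative exhibit an explicit theta subgraph or a pair of triangles sharing a vertex inside one colour class. Once the lower bound is established for $G_0$, Theorem~\ref{thm:main} follows from the disjoint-union observation of the first paragraph.
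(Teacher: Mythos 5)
Your reductions are all sound: the disjoint-union amplification is correct (the chromatic number of a disjoint union is the maximum over components, and a $1$-selection of the union is just an independent choice on each copy), and the reformulation of $\chi_1(G_0)\le 2$ as the existence of a bipartition of $V(G_0)$ whose monochromatic subgraph is a pseudoforest is exactly right --- it is the content of the paper's Proposition~\ref{prop:unicyc} combined with the observation that the monochromatic edges of any witnessing $2$-colouring of $G_0-S$ form a subset of $S$. But the proof stops precisely where the theorem begins. You never produce a concrete planar graph $G_0$, and you never verify the property ``every bipartition of $V(G_0)$ leaves a bicyclic monochromatic component''; you correctly observe that all the natural single-piece candidates (odd wheels, prisms, octahedron, icosahedron) fail, and then defer both the design of the gadgets and the gluing pattern and the exhaustive case analysis to future work. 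That case analysis over all $2$-colourings of a graph large enough to defeat every ``equatorial'' bipartition is not a routine verification --- it is the entire mathematical content of the theorem, and without it nothing has been proved.

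For comparison, the paper avoids a direct case analysis on $2$-colourings altogether by passing to the dual. It takes $G$ to be a plane triangulation whose dual $G^*$ is a $3$-connected cubic plane graph assembled from copies of Tutte's fragment $T'$ (in which every Hamiltonian cycle, and more generally every $2$-factor avoiding the distinguished edge $a$, contains a cycle missing the other two cut edges). Two structural lemmas about a minimal $1$-selection set $S$ guaranteeing bipartiteness --- that every triangle of $G$ has exactly one or three edges in $S$ with at most two triangles of the latter kind, and that every face of $G^*-S^*$ is bounded by at most two cycles --- convert the pseudoforest condition into a statement about $2$-factors of $G^*$, where the Tutte-fragment property forces a face with three boundary cycles, a contradiction. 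If you want to complete your approach, you would either need to carry out this dual translation or exhibit an explicit $G_0$ together with a genuinely exhaustive (likely computer-assisted or symmetry-reduced) check of all bipartitions; as written, the proposal is a correct framing of the problem rather than a proof.
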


Let us mention here that, as observed by Voigt~\cite{Voi24},
the above result can also be obtained through a different approach, 
using a result of Kemnitz and Voigt~\cite{KemVoi18} on list coloring of planar graphs.

The rest of the paper is organized as follows. 
In Section~\ref{sec:prel}, we define notions used further on,
in Section~\ref{sec:proof}, we prove Theorem~\ref{thm:main},
and in Section~\ref{sec:conc}, we conclude with some directions of further work.

\section{Preliminaries}
\label{sec:prel}

In this section, we introduce the notions and terminology that we use in our proofs.

For a set of edges $S$, by $G - S$ we denote the graph obtained by removing the edges of $S$ from $G$,
and by $G[S]$ we denote the subgraph of $G$ induced by the edges of $S$.

A {\em matching} in a graph $G$ is any $1$-regular subgraph of $G$,
and a {\em $k$-factor} of $G$ is a $k$-regular spanning subgraph of $G$.

For a {\em plane graph} $G$, i.e., a planar graph together with some embedding in the plane, 
with $V(G)$, $E(G)$, and $F(G)$ we denote its set of vertices, edges, and faces, respectively.
The edges bounding a face $f$ of a plane graph are the {\em boundary edges} of $f$.
In a connected plane graph, the length of a shortest closed trail 
along the edges bounding a face $f$ is the {\em length of $f$}, denoted by $\ell(f)$.
If a graph is not connected, then the length of a face $f$ is the sum of the lengths of shortest closed trails bounding $f$.
A face of length $k$ is called a {\em $k$-face}.
In a $2$-connected plane graph, the boundary edges of every face form a cycle~\cite{MohTho01},
while in non-connected plane graphs face boundaries may be comprised of several (disconnected) parts.
In particular, if a boundary of a face contains a bridge, that edge is counted twice in the length of a face
as the boundary trail passes it twice.
On the other hand, as shown by Whitney~\cite{Whi33},
a $3$-connected planar graph has a unique (up to equivalence) embedding in the plane. 

If every face in a plane graph $G$ is of length $3$, then $G$ is a {\em triangulation},
and similarly, $G$ is a {\em quadrangulation} if its every face is of length $4$.
Recall that every plane quadrangulation is bipartite.

A {\em (geometric) dual} $G^*$ of a plane graph $G$ is a plane (multi-)graph with $V(G^*) = F(G)$, $F(G^*) = V(G)$ 
and two vertices $f^*,g^*$ in $G^*$ being connected by an edge for every edge incident with the both corresponding faces $f,g$ in $G$
(so, each edge $e \in E(G)$ has a corresponding edge $e^* \in E(G^*)$).
For a set $S$ of edges from $G$, we denote the set of the corresponding edges in $G^*$ by $S^*$.
Moreover, if $G$ is a triangulation,
then $G^*$ is a bridgeless cubic graph
and therefore it contains a perfect matching~\cite{Pet1891}.
Note also that the dual of $G^*$ is the graph $G$.

\section{Proof of Theorem~\ref{thm:main}}
\label{sec:proof}

Before we give the proof of the theorem, 
we state several auxiliary results.
We first formalize the observation from the definition of $1$-selections.
\begin{proposition}
	\label{prop:unicyc}
	Let $H$ be a subgraph of a graph $G$ induced by some subset of edges of $G$.
	Then, every component of $H$ contains at most one cycle if and only if 
	$H$ is a $1$-selection set.
\end{proposition}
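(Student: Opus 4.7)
The plan is to prove the two directions of the equivalence separately. The forward implication reduces to a single counting argument on each component, while the backward implication requires an explicit construction of a valid $1$-selection.

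For the forward direction, I would argue that if $E(H) = f(V(G)) \setminus \{\emptyset\}$ for some $1$-selection $f$, then for any component $C$ of $H$, each edge of $C$ is selected by at least one of its endpoints, which lies in $V(C)$. Choosing one such endpoint for every edge yields an injective map $E(C) \to V(C)$, and hence $|E(C)| \le |V(C)|$. A connected graph with at most as many edges as vertices is either a tree or contains exactly one cycle, so in either case $C$ contains at most one cycle.

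For the backward direction, I would construct $f$ component by component. For a tree component $T$, I root $T$ at an arbitrary vertex $r$ and set $f(v)$ to be the edge from $v$ to its parent for every $v \ne r$, with $f(r) = \emptyset$. For a component $C$ with a (necessarily unique) cycle $v_1 v_2 \cdots v_k v_1$, I orient the cycle as a directed cycle and let $f(v_i)$ be the edge $v_i v_{i+1}$ (indices mod $k$); each subtree hanging off a cycle vertex is then handled by the tree construction with the cycle vertex playing the role of the root. Finally, I set $f(v) = \emptyset$ for every vertex of $G$ that does not appear in $H$.

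The only thing to verify is that $f$ is a genuine $1$-selection with $f(V(G)) \setminus \{\emptyset\} = E(H)$, i.e.\ no vertex is assigned more than one incident edge and every edge of $H$ is assigned at least once. Both facts are immediate from the construction: in the tree case each edge is selected once, by its deeper endpoint; in the unicyclic case the cycle edges are covered by their tails and the dangling tree edges by their deeper endpoints. I do not expect any serious obstacle here, as the statement is essentially a reformulation of the classical characterization of pseudoforests (graphs in which every component has at most one cycle) via orientations of maximum out-degree one.
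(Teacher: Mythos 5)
Your proposal is correct and follows essentially the same route as the paper: a counting argument ($|E(C)|\le|V(C)|$ forced by the injective edge-to-selector map) for one direction, and an explicit construction rooting tree components and orienting the unique cycle for the other. The paper phrases the counting step contrapositively ($|E(C)|\ge|V(C)|+1$ forces two edges on one vertex), but the substance is identical.
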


\begin{proof}
	Assume first that every component of $H$ contains at most one cycle.	
	If $C$ does not contain a cycle, then we select an arbitrary vertex of the tree $C$ to be $v_r$, 
	representing the root of the considered tree.
	If $C$ contains a cycle, then we direct its edges in a circular way, 
	and then we temporarily collapse the cycle into a single vertex $v_r$, 
	the root of the tree obtained this way. 
	Now we direct all the edges of $C$ away from $v_r$.
	
	Next, we define a $1$-selection $f$ such that to every vertex $v$ of $C$
	we assign the edge whose terminal vertex is $v$.
	Note that in the case when $C$ does not contain a cycle, the vertex $v_r$ has no edge assigned.
	This establishes the right implication.
	
	For the left implication, assume that $H$ is a $1$-selection set and
	suppose the contrary that there is a component $C$ of $H$ containing at least two cycles.
	In this case, $|E(C)| \ge |V(C)| + 1$, meaning that at least two edges need 
	to be assigned to the same vertex, a contradiction.
\end{proof}

We say that a $1$-selection $f$ guarantees a property $\mathcal{P}$ for a graph $G$
if $G_f$ has the property $\mathcal{P}$;
analogously, a $1$-selection set guarantees a property $\mathcal{P}$ for $G$.
In the following two lemmas, we use the notion of a {\em minimal $1$-selection set},
by which we mean a $1$-selection set $S$ which guarantees bipartiteness of $G-S$ for a graph $G$
but removal of any edge from $S$ would not guarantee bipartiteness anymore.

\begin{lemma}
	\label{lem:K2OrClaw}
	Let $G$ be a plane triangulation. 
	If $S \subseteq E(G)$ is a minimal $1$-selection set which guarantees bipartiteness for $G$ (i.e., $G - S$ is bipartite),
	then every $3$-face in $G$ has exactly one or three of its edges in $S$.
	Moreover, there are at most two $3$-faces with all three edges in $S$.
\end{lemma}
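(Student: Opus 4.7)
My plan is to handle the two assertions in turn. First, the observation that every $3$-face must contain at least one edge of $S$ is immediate: triangles are odd cycles and $G-S$ is bipartite, so any $3$-face with no $S$-edge would survive as an odd cycle in $G-S$. To rule out that some $3$-face $T=abc$ contains exactly two $S$-edges, say $ab,ac\in S$ and $bc\in E(G-S)$, I would use minimality. Fix any proper $2$-coloring of $G-S$; since $bc$ survives, vertices $b$ and $c$ receive distinct colors, so $a$ agrees with exactly one of them, and then one of the edges $ab,ac$ has endpoints of distinct colors. Adding that edge back to $G-S$ preserves the coloring, so the corresponding strict subset of $S$ still guarantees bipartiteness; it is still a $1$-selection set since subsets of $1$-selection sets inherit the ``at most one cycle per component'' property of Proposition~\ref{prop:unicyc}. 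This contradicts the minimality of $S$.

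For the second assertion I would double-count. Let $a$ (resp.\ $c$) denote the number of $3$-faces with exactly one (resp.\ three) $S$-edges; by the first assertion these account for all $2n-4$ faces of the triangulation on $n=|V(G)|$ vertices, giving $a+c=2n-4$. Each edge of $G$ lies in exactly two $3$-faces, so double-counting pairs (edge of $S$, incident $3$-face) yields $2|S|=a+3c$, whence $|S|=n-2+c$. On the other hand, the $1$-selection property forces every component of $G[S]$ to have at most as many edges as vertices, so summing over components gives $|S|\le n$. Combining these, $n-2+c\le n$, i.e.\ $c\le 2$, as required.

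The only real pitfall I foresee is the minimality step in the first part: I must check that removing an edge from $S$ keeps it a $1$-selection set, not merely that $G$ minus the shrunken set is bipartite. This is immediate from Proposition~\ref{prop:unicyc}, since deleting an edge cannot create a new cycle in any component, so the ``at most one cycle per component'' characterisation survives. Everything else is a short counting argument built on top of two standard facts: that a $1$-selection set has at most $|V(G)|$ edges, and that a plane triangulation on $n$ vertices has $2n-4$ faces each counted twice by the edge-face incidence sum.
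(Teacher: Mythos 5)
Your proposal is correct and follows essentially the same route as the paper: the two-edge case is excluded by the identical $2$-coloring argument, and the bound of two fully covered faces comes from the same combination of the edge--face incidence count with the bound $|S|\le n$ forced by the $1$-selection property. Your exact identity $|S| = n-2+c$ is a slightly cleaner rendering of the paper's covering estimate, but the underlying argument is the same.
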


\begin{proof}
	Let $n = |V(G)|$.
	Since $G$ is a plane triangulation, it follows from Euler's formula that the number of $3$-faces in $G$ is exactly $2n-4$.
	If $G-S$ is bipartite, then every $3$-face must have at least one edge in $S$.
	Since every edge is incident with two faces, this means that we need at least $n-2$ edges to cover every face
	(note that covering all faces with exactly $n-2$ edges means that the dual of $G$ admits a perfect matching).
	So, $n-2 \le |S| \le n$.
	
	Suppose now that $uvw$ is a $3$-face with exactly two edges, say $uv$ and $vw$, in $S$.
	Then, in any $2$-coloring of the (bipartite) graph $G - S$, the vertices $u$ and $w$
	receive distinct colors.
	Therefore, $v$ has distinct color as $u$ or $w$, say $u$, and thus the edge $uv$ needs not be in $S$.
	This means that $S - uv$ also guarantees bipartiteness for $G$, which contradicts the minimality of $S$.
	
	Finally, observe that if all three edges of a $3$-face are in $S$, they cover four $3$-faces altogether,
	and thus we can have at most two $3$-faces with all three edges in $S$, 
	since $n-2 \le |S| \le n$.
	Moreover, the two $3$-faces are not adjacent by Proposition~\ref{prop:unicyc}.
\end{proof}

From Lemma~\ref{lem:K2OrClaw} it follows that the graph $G^*[S^*]$ induced by the edges of $S^*$
is comprised only of isolated edges and at most two claws $K_{1,3}$.

For the purposes of stating and proving the next lemma, 
we refer to isolated vertices as {\em degenerate cycles}.

\begin{lemma}
	\label{lem:3faces}
	Let $G$ be a plane triangulation. 
	If $S \subseteq E(G)$ is a minimal $1$-selection set which guarantees bipartiteness for $G$ (i.e., $G - S$ is bipartite),
	then every face of the graph $G^* - S^*$ has a boundary consisting of at most two (possibly degenerate) cycles.
\end{lemma}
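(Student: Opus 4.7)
The plan is to first use Lemma~\ref{lem:K2OrClaw} to pin down the very rigid structure of the plane graph $G^* - S^*$, then use the minimality of $S$ in a $2$-colouring argument to rule out ``chords'' inside any component of $(V(G),S)$, and finally finish with a short planar-topology count.

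By Lemma~\ref{lem:K2OrClaw}, every $3$-face of $G$ contains exactly one or exactly three edges of $S$, with at most two faces of the latter kind. Dually, every vertex of $G^*$ is incident to one or three edges of $S^*$, so every vertex of $G^* - S^*$ has degree $2$ or $0$, and at most two of them have degree $0$. Consequently, $G^* - S^*$ is a disjoint union of simple cycles together with up to two isolated vertices. Since the maximum degree is $2$, every connected component of the boundary of any face of $G^* - S^*$ is automatically either a simple cycle or a single isolated vertex --- i.e., a (possibly degenerate) cycle. It remains only to bound by $2$ the number of such components at every face.

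Deleting $S^*$ in $G^*$ is dual to contracting $S$ in $G$, so faces of $G^* - S^*$ are in bijection with the connected components of $(V(G),S)$; write $F_C$ for the face corresponding to a component $C$. By Proposition~\ref{prop:unicyc}, each such $C$ is either a tree (possibly a single vertex) or unicyclic. The main technical step, and the step I expect to be the bottleneck, is the following \emph{no-chord property}: for every component $C$ and every $u,v \in V(C)$, if $uv \in E(G)$ then $uv \in S$. The idea is to exploit minimality of $S$: fix any proper $2$-colouring $\chi$ of $G - S$; minimality guarantees that removing any $e = xy \in S$ destroys bipartiteness, which is possible only when $x,y$ lie in the same component of $G-S$ with $\chi(x) = \chi(y)$. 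Propagating this equality along the unique $S$-path inside $C$ from $u$ to $v$ forces $\chi(u) = \chi(v)$, contradicting the opposite requirement enforced by any edge $uv \in E(G)\setminus S$.

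With the no-chord property, $F_C$ contains no internal ``slits'': it is a clean planar thickening of the embedded $1$-complex $C$ and deformation-retracts onto it. Hence $\chi(F_C) = |V(C)| - |E(C)|$, which equals $1$ when $C$ is a tree and $0$ when $C$ is unicyclic. Combining with the standard identity $\chi(F) = 2 - k$ for a planar region with $k$ boundary components yields $k \in \{1,2\}$, as required. In the particular subcase where $C$ is a triangle whose three edges all lie in $S$, the inner boundary of $F_C$ is precisely the claw centre of $G^*[S^*]$ --- an isolated vertex of $G^* - S^*$ sitting inside $F_C$ --- which matches the degenerate-cycle possibility from the first paragraph and closes the proof.
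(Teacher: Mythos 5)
Your argument is correct in substance, but it takes a genuinely different route from the paper. The paper argues by contradiction directly in the dual: if a face $g$ of $G^*-S^*$ had three or more boundary cycles, then each boundary cycle $C_i^*$ determines a set $R_i^*\subseteq S^*$ of edges leaving it, each $R_i^*$ is an edge cut of $G^*$, and dual edge cuts contain cycles of $G$; three boundary cycles then force at least two cycles inside a single component of $G[S]$, contradicting Proposition~\ref{prop:unicyc}. You instead (i) identify faces of $G^*-S^*$ with components $C$ of $(V(G),S)$ via deletion--contraction duality, (ii) prove a \emph{no-chord property} from minimality (every edge of $S$ must join two equally coloured vertices in a fixed $2$-colouring of $G-S$, so each component of $G[S]$ is monochromatic and hence induced in $G$), and (iii) conclude that $F_C$ is a regular neighbourhood of the embedded graph $C$, so $2-k=\chi(F_C)=\chi(C)\ge 0$. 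Both proofs ultimately rest on Proposition~\ref{prop:unicyc} ($\chi(C)\in\{0,1\}$ is exactly ``at most one cycle per component''), and both are really the cut--cycle/Euler-characteristic duality in different clothing. Your no-chord lemma is a genuinely new ingredient not present in the paper (which uses minimality in this lemma only through Lemma~\ref{lem:K2OrClaw}), and it is needed: without it a remaining edge $(uv)^*$ with $u,v$ in the same component would have $F_C$ on both sides, the region would be pinched, and the retraction onto $C$ would fail. What your route buys is a sharper conclusion ($k=1$ exactly when $C$ is a tree, $k=2$ exactly when $C$ is unicyclic) and a clean structural fact about minimal $1$-selection sets; what it costs is that the step ``$F_C$ deformation-retracts onto $C$'' is stated informally and, to be fully rigorous, requires spelling out the local picture at each vertex of $G^*$ (no face self-adjacency across a remaining edge or at a degree-$2$ vertex, and claw centres becoming interior punctures counted as degenerate boundary components) --- you have all the ingredients for this, but a referee would ask you to write them out. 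The paper's cut-based argument avoids the topology at the price of a somewhat more delicate bookkeeping of the sets $R_i^*$.
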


Note that, by Lemma~\ref{lem:K2OrClaw}, a degenerate cycle may appear on the boundary of some face
at most twice (in the cases of $G^*[S^*]$ having components isomorphic to $K_{1,3}$).

\begin{proof}
	Recall that since $G$ is a triangulation, $G^*$ is cubic. 
	Moreover, since $S^*$ covers all the vertices of $G^*$ and $G[S^*]$ has only vertices of degrees $1$ and $3$,
	the graph $G^* - S^*$ has maximum degree $2$ and the boundary of every face is comprised of
	cycles.
	
	Now, suppose to the contrary that $g$ is a face of $G^* - S^*$ whose boundary consists of at least three (possibly degenerate) cycles.	
	Let $\mathcal{C}_g^* = \set{C_1^*,C_2^*,\dots,C_\ell^*}$ be the set of the boundary cycles of $g$.
	Next, let $R^* \subseteq S^*$ be the set of all edges of $S^*$ incident with the set of faces of $G^*$ corresponding to the face $g$ of $G^* - S^*$.
	Clearly, the set $R^*$ is nonempty.
	Observe that the set $R$ in $G$ corresponding to the set $R^*$ 
	induces a connected subgraph $G[R]$ of the graph $G[S]$,
	which has at most one cycle by Proposition~\ref{prop:unicyc}.
	
	Now, let $R^*_i \subset R^*$ be the edges with exactly one endvertex incident with the cycle $C_i^* \in \mathcal{C}_g^*$.
	Observe that the edges of every $R^*_i$ form cuts in $G^*$
	and thus the corresponding sets $R_i$ form subgraphs of $G$ with at least one cycle in each of their components and hence also in $G[R]$.	
	Therefore, we may assume that every $R_i$ forms a connected component with exactly one cycle, otherwise we already have at least two cycles in $G[R]$, a contradiction.
	
	It remains to show that $G[R]$ contains at least two cycles, since it may happen that 
	for some pair of sets $R_j^*$ and $R_k^*$, for some $j,k\in\set{1,\dots,\ell}$, 
	we have that $R_j^* \subseteq R_k^*$ or $R_k^* \subseteq R_j^*$, 
	and thus the sets $R_j$ and $R_k$ contribute only one cycle.
	However, note that since every edge in $R_j^* \cap R_k^*$ has one endvertex in $R_j^*$ and the other in $R_k^*$,
	it cannot appear in any other $R_i^*$, and therefore, there is at least one additional cycle in $G[R]$,
	which by Proposition~\ref{prop:unicyc} means that $S$ is not a $1$-selection, a contradiction.
\end{proof}

Now, we are ready to give a proof of Theorem~\ref{thm:main}.
\begin{proof}[Proof of Theorem~\ref{thm:main}]
	In order to prove the theorem, we construct a cubic $3$-connected plane graph $G^*$ whose dual $G$
	has $\chi_1(G) = 3$.
	In the construction, we use copies of $T'$ (see the right configuration in Figure~\ref{fig:Tutte}).
	The configuration $T'$ has the property that if a graph $G$ contains $T'$ as a subgraph, 
	then every path in $G$, which uses two of the edges $a$, $b$, $c$ and visits all vertices of $T'$
	contains the edge $a$~\cite{Tut46,Ros90}. In other words, every Hamiltonian cycle in $T$ (see the left graph in Figure~\ref{fig:Tutte})
	contains the edge $a$.
	\begin{figure}[htp!]
		$$
			\includegraphics{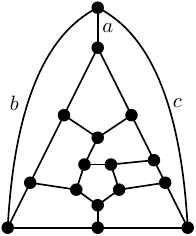} \quad\quad\quad
			\includegraphics{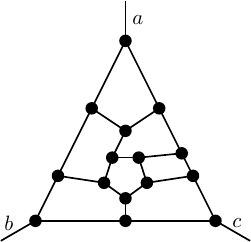}
		$$
		\caption{The Tutte's triangle $T$ (left) and the configuration $T'$ (right).}
		\label{fig:Tutte}	
	\end{figure}

	
	In Figure~\ref{fig:Tuttes}, all possible matchings in $T'$ containing the edge $a$ are depicted,
	and this implies the following claim.
	\begin{claim}
		\label{cl:2fac}
		Every $2$-factor in $T'$, not containing the edge $a$, contains at least one cycle 
		not using the edges $b$ and $c$.
		\qedclaim
	\end{claim}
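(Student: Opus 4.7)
The plan is to proceed by contradiction, leveraging the Tutte path property of $T'$ recalled just before the claim: no path in $T'$ that visits all its vertices and uses exactly two of the edges $a,b,c$ can avoid $a$. Equivalently, $T'$ admits no Hamiltonian path that uses both $b$ and $c$ but omits $a$.

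Suppose $F$ is a $2$-factor of $T'$ with $a\notin F$, in which every cycle contains at least one of $b$ or $c$. Because each of $b$ and $c$ lies on at most one cycle of the $2$-factor, $F$ consists of at most two cycles. I would split into these two regimes. In the single-cycle case, $F$ is a Hamiltonian cycle of $T'$ avoiding $a$; I would first show that it must pass through both $b$ and $c$ (the subcase using only one of $\{b,c\}$ being ruled out by the degree-$2$ port structure of $T'$, or alternatively by inspection of Figure~\ref{fig:Tuttes}), and then deleting any cycle edge distinct from $b$ and $c$ yields a Hamiltonian path in $T'$ that uses $b$ and $c$ but not $a$, contradicting the Tutte property. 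In the two-cycle case, with $b$ on one cycle and $c$ on the other, I would rewire the pair across a suitable edge cut between them to produce a single Hamiltonian cycle of $T'$ still using $b$ and $c$ and still avoiding $a$, reducing to the previous case.

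Equivalently, and matching the hint that the claim is obtained directly from Figure~\ref{fig:Tuttes}, I would observe that the $2$-factors of $T'$ avoiding $a$ are in bijection with the perfect matchings of $T'$ containing $a$, obtained by taking complements within the edges meeting degree-$3$ vertices (the two edges at each degree-$2$ port vertex being forced into every $2$-factor). Since the figure exhibits all such matchings, the claim reduces to checking, in each pictured case $M$, that the cycle decomposition of $E(T')\setminus M$ contains a cycle disjoint from $\{b,c\}$; this is a finite, direct inspection.

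The main obstacle is the single-cycle subcase in which a hypothetical Hamiltonian $F$ would use only one of $b,c$: a purely conceptual argument needs the local structure at the port vertices of $T'$, and the cleanest dispatch of this case is the exhaustive verification afforded by Figure~\ref{fig:Tuttes}.
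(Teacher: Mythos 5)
Your second, ``finite inspection'' route is exactly what the paper does: Claim~\ref{cl:2fac} is justified solely by Figure~\ref{fig:Tuttes}, which exhibits all matchings of $T'$ containing $a$ together with the complementary $2$-factors, each of which visibly has an internal cycle. That part of your proposal matches the paper and is fine.

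Your primary route (contradiction via the Tutte path property) is genuinely different and can be made to work, but not as you have set it up. First, since $b$ and $c$ are pendant in the configuration $T'$, there is no ``Hamiltonian cycle of $T'$'': in the single-cycle case the trace of $F$ inside $T'$ is already a spanning path entering via $b$ and leaving via $c$, which contradicts the quoted Tutte property directly, with no edge deletion needed. Second, and more seriously, your two-cycle case rests on an unspecified ``rewiring'' of two cycles into one; nothing guarantees such a rewiring exists, preserves the $2$-factor property, or keeps $a$ out. The correct way to dispatch that case is parity: $\{a,b,c\}$ is a $3$-edge-cut isolating $T'$, and every cycle of a $2$-factor meets an edge cut in an even number of edges, so the cycle through $b$ would have to use $a$ or $c$ as well; as $a$ is excluded, $b$ and $c$ lie on a common cycle and the two-cycle case is vacuous. (The same parity argument shows the only remaining possibility is $b,c\notin F$, in which case the trace of $F$ in $T'$ consists entirely of internal cycles and the claim is immediate.) With these repairs your conceptual argument becomes a clean alternative to the figure check --- arguably more robust, since it does not depend on the completeness of the enumeration in Figure~\ref{fig:Tuttes} --- but as written the rewiring step is a genuine gap.
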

	\begin{figure}[htp!]
		$$
			\includegraphics{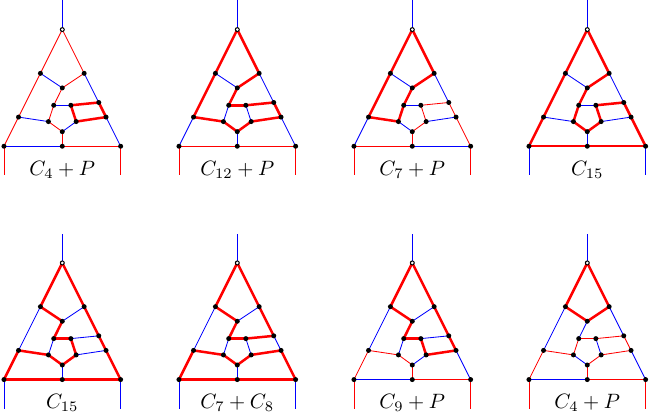}
		$$
		\caption{Possible matchings (blue edges) in $T'$ that contain the edge $a$, and the corresponding $2$-factors (red edges)
			with internal cycles depicted heavier.}
		\label{fig:Tuttes}	
	\end{figure}	
	
	Now, consider the graph $G^*$ in Figure~\ref{fig:theGraph}.
	\begin{figure}[htp!]
		$$
			\includegraphics{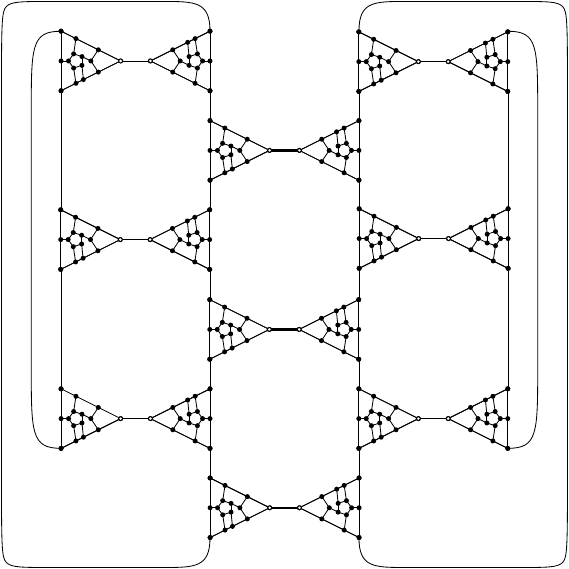}
		$$
		\caption{The dual $G^*$ of a plane triangulation $G$ with $\chi_1(G) = 3$.
			The three edges depicted heavier form a $3$-edge-cut in which at least one of the edges is also in $R^*$.}
		\label{fig:theGraph}	
	\end{figure}		
	It contains three $3$-edge-cuts each comprised of three edges $a$ and copies of $T'$ on both sides of each edge $a$.
	Since at most two vertices in $G^*[S^*]$ have degree $3$,
	this means that at least six copies of $T'$ corresponding to one $3$-edge-cut (say the middle one, depicted with heavier edges) 
	have the property that for any $1$-selection set $R$ guaranteeing bipartiteness for $G$
	at least one edge $a$, 
	denote it $a_x$, in that $3$-edge-cut is in the corresponding set $R^*$.
	
	Therefore, by Claim~\ref{cl:2fac}, the two corresponding copies of $T'$ 
	contain a cycle in the boundary of some face of $G^* - R^*$
	and consequently, the face incident with $a_x$ has at least three boundary cycles in $G^* - R^*$.
	By Lemma~\ref{lem:3faces}, this means that $G - R$ is not bipartite.
	
	In order to obtain an infinite family of plane graphs with the robust chromatic number equal to $3$,
	observe that $G^*$ can easily be further expanded by, e.g., copies of $3$-edge-cuts 
	and so the corresponding duals will still not be bipartite.
\end{proof}

On the other hand, there is a big class of plane graphs having
the robust chromatic number equal to $2$.
\begin{proposition}
	\label{prop:2}
	For every plane triangulation $G$ whose dual admits a $2$-factor with at most two cycles, 
	we have that $\chi_1(G) = 2$.
\end{proposition}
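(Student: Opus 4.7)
The plan is to extract a $1$-selection set directly from the given $2$-factor via planar duality. Let $F^*$ be a $2$-factor of $G^*$ with at most two cycles, and note that since $G$ is a plane triangulation, $G^*$ is cubic, so $M^* := E(G^*) \setminus E(F^*)$ is a perfect matching of $G^*$. Let $M \subseteq E(G)$ be the set of primal edges corresponding to $M^*$.

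First I would observe that $G - M$ is a bipartite quadrangulation. Since $M^*$ covers every vertex of $G^*$ exactly once, every $3$-face of $G$ contains exactly one edge of $M$; removing these edges merges the two triangles sharing each $M$-edge into a single $4$-face, so $G - M$ is a plane quadrangulation and is therefore bipartite.

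The heart of the argument is to verify that $M$ is a $1$-selection set; by Proposition~\ref{prop:unicyc} this reduces to showing that every component of $G[M]$ contains at most one cycle. Here I would use the standard cycle/cut duality for planar graphs: a cycle of $G$ whose edges all lie in $M$ corresponds under $e \leftrightarrow e^*$ to a bond of $G^*$ whose edges all lie in $M^*$. A cut $\partial S$ of $G^*$ is contained in $M^*$ precisely when no edge of $F^*$ leaves $S$, which happens exactly when $S$ is a union of components of $F^*$. Because $F^*$ has at most two components, the space of cuts of $G^*$ contained in $M^*$ has dimension at most $1$, and hence so does the cycle space of $G[M]$. Consequently $G[M]$ is a disjoint union of trees together with at most one unicyclic component, so every component contains at most one cycle.

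It follows that the $1$-selection $f$ realising $f(V(G)) = M$ yields a bipartite $1$-removed subgraph $G_f = G - M$, giving $\chi_1(G) \le 2$. The matching lower bound $\chi_1(G) \ge 2$ is immediate once one notes that, for a triangulation on $n \ge 4$ vertices, any $1$-selection deletes at most $n$ of the $3n - 6$ edges, so $G_f$ contains at least one edge. The main conceptual step, and the only nontrivial one, is the cycle-bond dimension count; everything else follows from the duality set-up already developed in Section~\ref{sec:prel}.
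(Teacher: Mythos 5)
Your proposal is correct and follows essentially the same route as the paper: both take the $1$-selection set to be the primal edges dual to the perfect matching complementing the $2$-factor, observe that $G-M$ is a quadrangulation and hence bipartite, and use cycle--cut duality in the plane to bound the number of cycles in $G[M]$. Your dimension count of the cuts of $G^*$ contained in $M^*$ is a more rigorous rendering of the paper's one-line observation that the only possible cycle in the primal set corresponds to the cut separating the two cycles of the $2$-factor, and you additionally make explicit the (trivial) lower bound $\chi_1(G)\ge 2$, which the paper leaves implicit.
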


\begin{proof}
	Let $C^*$ be a $2$-factor in $G^*$ with at most two cycles. 
	Then $S^* = E(G^*) \setminus E(C^*)$ is a perfect matching in $G^*$.
	Observe that there is at most one cycle in $S$ corresponding to the cut in $S^*$ represented by the edges with endvertices incident with
	two cycles of $C^*$.
	Therefore, $S$ is a $1$-selection set by Proposition~\ref{prop:unicyc}.
	Moreover, since $S^*$ is a perfect matching, every $3$-face in $G$ has one incident edge in $S$
	and so the graph $G-S$ is quadrangulation, hence bipartite.
%
\end{proof}

Clearly, all subgraphs of triangulations from Proposition~\ref{prop:2} also have the robust chromatic number at most $2$.
\begin{corollary}
	Every planar graph $G$, which is a subgraph a plane triangulation whose dual admits a $2$-factor with at most two cycles, 
	has $\chi_1(G) \le 2$.
\end{corollary}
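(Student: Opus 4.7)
The plan is to deduce the corollary from Proposition~\ref{prop:2} via a general monotonicity property of the robust chromatic number under taking subgraphs. Concretely, I would establish that whenever $H$ is a subgraph of $G$, one has $\chi_1(H) \le \chi_1(G)$. Applied with $G$ being the triangulation supplied by the hypothesis and $H$ being the planar graph in the statement, this immediately yields $\chi_1(H) \le 2$.

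The verification of this monotonicity is where Proposition~\ref{prop:unicyc} does all the work. Start from a 1-selection set $S \subseteq E(G)$ achieving $\chi(G - S) = \chi_1(G)$, and set $S_H := S \cap E(H)$. Each component of $H[S_H]$ sits inside some component of $G[S]$, which by Proposition~\ref{prop:unicyc} contains at most one cycle; since the property of having at most one cycle is preserved by taking subgraphs, every component of $H[S_H]$ also contains at most one cycle. Applying Proposition~\ref{prop:unicyc} in the other direction, $S_H$ is a 1-selection set in $H$. Moreover, $H - S_H$ is a subgraph of $G - S$, so a proper $\chi_1(G)$-coloring of $G - S$ restricts to a proper $\chi_1(G)$-coloring of $H - S_H$, proving the monotonicity.

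Combining the monotonicity with Proposition~\ref{prop:2} gives the corollary. There is no real obstacle to overcome: the entire content lies in Proposition~\ref{prop:unicyc} together with the trivial observation that subgraphs of bipartite graphs are bipartite. The only point requiring a moment's care is that one should restrict the 1-selection \emph{set} rather than try to transplant a 1-selection function, and this is precisely what Proposition~\ref{prop:unicyc} enables.
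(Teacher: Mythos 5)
Your proposal is correct and takes essentially the same route as the paper: the paper justifies the corollary with the one-line remark that subgraphs of the triangulations from Proposition~\ref{prop:2} inherit the bound, i.e.\ it implicitly invokes the same monotonicity $\chi_1(H)\le\chi_1(G)$ for subgraphs. Your argument --- restricting the optimal $1$-selection \emph{set} to $E(H)$ and using Proposition~\ref{prop:unicyc} in both directions to certify that the restriction is again a $1$-selection set --- is precisely the detail the paper leaves as ``clear.''
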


\section{Conclusion}
\label{sec:conc}

The investigation of robust invariants is in an early stage,
but a number of intriguing results have already been obtained.
However, there are plenty of further open problems in this area.

For example, based on the result presented in this note,
one may wonder what is the upper bound for the robust chromatic number of
graphs embeddable to surfaces of higher genus.
In the case of the torus, every graph embeddable on it is $6$-degenerate,
and with some additional analysis, using the result of Thomassen~\cite{Tho94} about proper coloring of toroidal graphs,
one can derive that every toroidal graph $G$ has $\chi_1(G) \le 3$.

On the other hand, for $K_{10}$, which has genus $4$ and non-orientable genus $7$ (see, e.g.,~\cite{MohTho01}), 
we have that $\chi_1(K_{10}) = 4$ by~\cite[Theorem~2]{BacPatTuzViz23}. 
So, the following question arises.
\begin{question}~
\begin{itemize}
	\item[$(a)$] Is there a graph $G$ with genus less than $4$ and $\chi_1(G) = 4$?
	\item[$(b)$] Is there a graph $G$ with non-orientable genus less than $7$ and $\chi_1(G) = 4$?
\end{itemize}
\end{question}

In particular, the result of Theorem~\ref{thm:deg} for degenerate graphs does not seem to be tight 
for degenerate graphs of given genus.
So we propose also the following.
\begin{problem}
	Determine the tight upper bound for the robust chromatic number of 
	graphs with given (non-orientable) genus.
\end{problem}

\paragraph{Acknowledgement.} 
The authors would like to thank the referee for a careful reading of the manuscript.
F. Kardo\v{s} was supported by the Slovak VEGA Grant 1/0743/21 and by the Slovak Research and Development Agency under the contract No. APVV--19--0308.
B.~Lu\v{z}ar was partially supported by the Slovenian Research Agency Program P1--0383 and the projects J1--3002 and J1--4008.
R.~Sot\'{a}k was supported by the Slovak VEGA Grant 1/0574/21 and by the Slovak Research and Development Agency under the contracts No. APVV--19--0153 and APVV--23--0191.
	
\bibliographystyle{plain}
{
	\bibliography{References}
}

\end{document}